\numberwithin{equation}{section}
\setlist[enumerate,1]{label={\rm(\arabic*)}, ref={\rm\arabic*}} 
\theoremstyle{plain}
\newtheorem{thm}{Theorem}[section]
\newtheorem{theorem}[thm]{Theorem}
\newtheorem{lemma}[thm]{Lemma}
\newtheorem{corollary}[thm]{Corollary}
\newtheorem{proposition}[thm]{Proposition}
\theoremstyle{definition}
\newtheorem{definition}[thm]{Definition}
\theoremstyle{remark}
\newtheorem{remark}[thm]{Remark}
\newtheorem{example}[thm]{Example}
\DeclareMathOperator{\Aut}{Aut}
\DeclareMathOperator{\Gal}{Gal}
\DeclareMathOperator{\Spec}{Spec}
\DeclareMathOperator{\id}{id}
\DeclareMathOperator{\RF}{RF}
\DeclareMathOperator{\RS}{RS}
\DeclareMathOperator{\GL}{GL}
\DeclareMathOperator{\PGL}{PGL}
\DeclareMathOperator{\PSL}{PSL}
\DeclareMathOperator{\PBD}{PBD}
\DeclareMathOperator{\diag}{diag}
\newcommand{\triv}{\mathrm{triv}}
\newcommand{\C}{{\mathbb C}}
\newcommand{\F}{{\mathbb F}}
\newcommand{\R}{{\mathbb R}}
\newcommand{\Z}{{\mathbb Z}}
\newcommand{\supth}[1]{\ensuremath{#1^{\mathrm{th}}}}
\title{On the number of real forms of a complex variety}
\author{Gerard van der Geer}
\address{Korteweg-de Vries Instituut, Universiteit van Amsterdam, Postbus 94248,
1090 GE  Amsterdam, The Netherlands}
\email{g.b.m.vandergeer@uva.nl}
\author{Xun Yu}
\address{Center for Applied Mathematics, Tianjin University, 92 Weijin Road, Nankai District, Tianjin 300072, P. R. China.}
\email{xunyu@tju.edu.cn}
\begin{document}



\maketitle

\begin{prelims}

\DisplayAbstractInEnglish

\bigskip

\DisplayKeyWords

\medskip

\DisplayMSCclass

\end{prelims}


\newpage

\setcounter{tocdepth}{1}

\tableofcontents


\section{Introduction}
If $X$ is a complex variety, then a real form of $X$ is a variety $Y$ defined over ${\R}$ such that $X \cong Y\times_{\Spec \R}\Spec\C$ as complex varieties.  Recently there has been a lot of activity dealing with complex varieties with infinitely many real forms. In this note we deal with quasi-projective varieties with finite automorphism group and derive a formula for the number of real forms. More precisely, for a variety over a finite field with a finite automorphism group, there is a mass formula that says that the weighted sum of its twists equals $1$.  Here we derive an analogue of this for the real forms of a complex algebraic variety.  We also give another bound involving the (isomorphism class of a) Sylow $2$-subgroup of the automorphism group.

As an application we give a bound on the number of real forms of plane curves.  Natanzon \cite{Na78} proved that a smooth projective curve $X$ of genus $g\ge 2$ has at most $2(\sqrt{g}+1)$ non-isomorphic real forms with at least one real point, and this bound is attained for infinitely many odd genera $g$. Gromadzki and Izquierdo \cite{GI98} proved that if $g$ is even, then $X$ has at most four non-isomorphic real forms with at least one real point. Our result shows that the number of non-isomorphic real forms of smooth plane curves is uniformly bounded independently of their genera and independently of having a real point or not. In particular, a smooth projective curve with at least nine non-isomorphic real forms cannot be a plane curve.

\section{A bound on the number of weighted real forms}
Let $G:=\Gal(\C/\R)=\{\id_\C, c\}$, where $c$ is complex conjugation $z\mapsto \bar{z}$.  Let $X$ be a complex quasi-projective variety and $\pi\colon X\rightarrow \Spec \C$ the structure morphism.  A {\it real form} of $X$ is a scheme $Y$ over $\Spec \R$ such that
$$
X \cong Y \times_{\Spec \R} \Spec \C
$$
over $\Spec \C$. Two real forms are equivalent if they are isomorphic over $\Spec \R$.
The set of equivalence classes of real forms of $X$ is denoted by $\RF(X)$. 

A {\it real structure} of $X$ is an anti-holomorphic involution 
$$\sigma \colon X \longrightarrow X;$$
that is, $\sigma$ is an automorphism over $\Spec \R$ with $\sigma^2 = \id_X$ and 
$\pi \circ \sigma = c \circ \pi$. 
Two real structures $\sigma$ and~$\sigma'$ on~$X$ are said to be equivalent 
if $\sigma' = h^{-1} \circ \sigma \circ h$ for some $h \in \Aut(X/\C)$. 
We denote by  $\RS(X)$ the set of real structures 
and by $\overline{\RS}(X)$ the set of equivalence classes of real structures of $X$. 
There is a $1$-$1$ correspondence between $\overline{\RS}(X)$ and $\RF(X)$ 
given by associating to $\sigma$ the quotient variety $X/\langle \sigma\rangle$ obtained by Galois descent
(see \cite[Section~III.1.3]{Se02}).
An automorphism $f\in \Aut(X/\C)$ descends to an automorphism of $X/\langle \sigma\rangle$ over $\Spec \R$ if and only if $f\circ \sigma =\sigma\circ f$.  Thus, as abstract groups, we may identify $\Aut(X/\langle \sigma\rangle/\R)$ with the centralizer $C_{\Aut(X/\C)}(\sigma):=\{f\in \Aut(X/\C)\mid f\circ \sigma =\sigma\circ f\}.$

From now on, we fix a real form $Y_0$ and an identification $X=Y_0\times_{\Spec\R} \Spec\C$.  Clearly,
$$
\sigma_0:=\id_{Y_0}\times c\colon X\longrightarrow X
$$ 
is a real structure of $X$. Then $G$ acts on $\Aut(X/\C)$ by 
$$
f \longmapsto c\cdot f:=\sigma_0\circ f\circ\sigma_0
$$
for any $f\in \Aut(X/\C)$.  A $1$-{\it cocycle} of $G$ in $\Aut(X/\C)$ is a map $s\mapsto a_s$ from $G$ to $\Aut(X/\C)$ such that
$$
a_{st}=a_s\circ (s\cdot a_t) \quad \text{\rm for all $s, t\in G$}\, .
$$ 
Since $G$ is a cyclic group of order $2$ generated by $c$, we may naturally identify the set $Z^1(G,\Aut(X/\C))$ of such $1$-cocycles with a subset of $\Aut(X/\C)$:
$$
Z^1(G,\Aut(X/\C))=\{f\in \Aut(X/\C)\mid f\circ (c\cdot f)=\id_X\}.
$$ 
Two $1$-cocycles $f$ and $f^\prime$ are {\it cohomologous} if there exists an $h\in \Aut(X/\C)$ such that
$$
f^\prime=h^{-1}\circ f\circ (c\cdot h).
$$ 
This is an equivalence relation in $Z^1(G, \Aut(X/\C))$, and the quotient set is the {\it first cohomology set} ${\rm H}^1(G, \Aut(X/\C))$; see \cite{Se02}.  The map $f \mapsto f\circ \sigma_0$ defines a bijection $Z^1(G,\Aut(X/\C))\leftrightarrow \RS(X)$ and induces a bijection ${\rm H}^1(G, \Aut(X/\C))\leftrightarrow \overline{\RS}(X)$.  Thus we have bijections
$$
{\rm H}^1(G, \Aut(X/\C))\longleftrightarrow \overline{\RS}(X)
\longleftrightarrow \RF(X) \, .
$$
  
A mass formula for curves defined over finite fields was given in \cite[Proposition 5.1]{GV92}.  The following theorem gives an analogue for real forms of an algebraic variety.

\begin{theorem}\label{thm:formula}
Let $X/\C$ be a quasi-projective algebraic variety with real form $Y_0$.  Suppose $\Aut(X/\C)$ is a finite group. Then we have
$$
\sum_{Y\in \RF(X)}\frac{1}{\#\;\Aut(Y/\R)}=\frac{\#\; Z^1(G, \Aut(X/\C))}{\#\; \Aut(X/\C)} \leq 1\, .
$$
Moreover, if $\Aut(X/{\C})$ is not abelian, the inequality is strict.
\end{theorem}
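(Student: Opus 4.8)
The plan is to realize the left-hand sum as an orbit count for the twisted-conjugation action of $\Aut(X/\C)$ on the cocycle set $Z^1(G,\Aut(X/\C))$, and to read off the weights via the orbit--stabilizer theorem. Write $A:=\Aut(X/\C)$ and let $\alpha\colon A\to A$ denote the involutive automorphism $f\mapsto c\cdot f=\sigma_0\circ f\circ\sigma_0$, so that $\alpha^2=\id$ and $Z^1(G,A)=\{f\in A\mid f\circ\alpha(f)=\id\}$. First I would record that the cohomology relation $f'=h^{-1}\circ f\circ\alpha(h)$ defines an action of $A$ on $Z^1(G,A)$ via $h\colon f\mapsto h\circ f\circ\alpha(h)^{-1}$; a short computation using $\alpha^2=\id$ shows this action preserves $Z^1(G,A)$, and by construction its orbits are exactly the classes in $\mathrm{H}^1(G,A)$, hence in bijection with $\RF(X)$ through the identifications recalled in the preliminaries.

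The crucial step is the stabilizer computation. Given a cocycle $f$, let $\sigma:=f\circ\sigma_0$ be the associated real structure. Using the identity $\sigma_0\circ h=\alpha(h)\circ\sigma_0$, I would verify that $h$ fixes $f$, that is $h\circ f\circ\alpha(h)^{-1}=f$, if and only if $h\circ\sigma=\sigma\circ h$, i.e.\ if and only if $h\in C_{A}(\sigma)$. By the identification $\Aut(X/\langle\sigma\rangle/\R)\cong C_{A}(\sigma)$ from the preliminaries, this stabilizer has order $\#\Aut(Y/\R)$, where $Y=X/\langle\sigma\rangle$. The orbit--stabilizer theorem then gives that the orbit of $f$ has cardinality $\#A\,/\,\#\Aut(Y/\R)$. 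Since the orbits partition $Z^1(G,A)$ and correspond bijectively to the classes $Y\in\RF(X)$, summing yields $\#\,Z^1(G,A)=\sum_{Y\in\RF(X)}\#A\,/\,\#\Aut(Y/\R)$; dividing by $\#A$ produces the asserted equality.

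The inequality $\le 1$ is then immediate, since $Z^1(G,A)\subseteq A$ forces $\#\,Z^1(G,A)\le\#A$. For the strictness when $A$ is non-abelian I would argue by contraposition: equality $\#\,Z^1(G,A)=\#A$ means $Z^1(G,A)=A$, i.e.\ $\alpha(f)=f^{-1}$ for every $f\in A$; but $\alpha$ is a group homomorphism, so inversion would be a homomorphism, which forces $A$ to be abelian. The only real obstacle is bookkeeping in the stabilizer computation---keeping the twist by $\alpha$ and the relation $\sigma_0\circ h=\alpha(h)\circ\sigma_0$ straight---so that the stabilizer of $f$ emerges as the genuine centralizer $C_{A}(\sigma)$ rather than a twisted variant; once that is pinned down, the rest is a direct application of the orbit--stabilizer theorem and an elementary observation about inversion.
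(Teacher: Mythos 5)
Your proposal is correct and follows essentially the same route as the paper's proof: twisted-conjugation action of $\Aut(X/\C)$ on $Z^1(G,\Aut(X/\C))$, identification of the stabilizer of $f$ with the centralizer of $\sigma=f\circ\sigma_0$ (hence with $\Aut(Y/\R)$), orbit--stabilizer counting, and the inclusion $Z^1\subseteq\Aut(X/\C)$ for the inequality. Your strictness argument (equality forces $\alpha(f)=f^{-1}$, and inversion being a homomorphism forces abelianness) is just a repackaging of the paper's commutator computation, so there is nothing substantively different to flag.
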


\begin{proof}
The proof is similar to that of \cite[Proposition 5.1]{GV92}. The group $\Aut(X/\C)$ has a (right) action on $Z^1(G, \Aut(X/\C))$ given by
$$
Z^1(G, \Aut(X/\C))\times \Aut(X/\C) \longrightarrow Z^1(G, \Aut(X/\C)),
\quad (f,h)\longmapsto h^{-1}\circ f\circ (c\cdot h).
$$
The orbits correspond to the cohomology classes in ${\rm H}^1(G,\Aut(X/\C))$.  The stabilizer of an element $f\in Z^1(G, \Aut(X/\C))$ is equal to
$$
\{h\in \Aut(X/\C)\mid (f\circ \sigma_0)\circ h=h\circ (f\circ\sigma_0)\}\, ,
$$
which can be identified with the automorphism group of the real form $Y=X/\langle f\circ \sigma_0\rangle$ corresponding to the real structure $f\circ \sigma_0$ of $X$.  Thus, by counting the cardinalities of the orbits, we find
$$
\#\; Z^1(G, \Aut(X/\C))=\sum_{Y\in \RF(X)}\frac{\#\; \Aut(X/\C)}{\#\;\Aut(Y/\R)}\, .
$$ 
Dividing by $\# \Aut(X/\C)$ delivers the desired formula.  Since $Z^1(G, \Aut(X/\C))$ is identified with a subset of $\Aut(X/\C)$, the inequality follows.

The obstruction for the product $fg$ of two elements $f,g \in Z^1(G,\Aut(X/\C)) \subseteq \Aut(X/\C)$ to lie in $Z^1(G,\Aut(X/\C))$ is the commutator: $fg \in Z^1(G,\Aut(X/\C))$ if and only if $f\, g\, c(f) \, c(g)=f\, g\, f^{-1} g^{-1}$ is trivial.  The equality $Z^1(G,\Aut(X/\C)) = \Aut(X/\C)$ implies that $\Aut(X/\C)$ is abelian and $c$ is the map $f \mapsto f^{-1}$ on $\Aut(X/\C)$.  This shows the inequality for non-abelian $\Aut(X/\C)$.
\end{proof}

\begin{corollary}
Suppose $\Aut(X/\C)$ is a finite group and $Y$ is a real form of\, $X$.
If $\Aut(Y/\R)=\{\id_Y\}$, then $\Aut(X/\C)$ is abelian and $\# \RF(X)=1$.
\end{corollary}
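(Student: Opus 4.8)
The plan is to read off both conclusions directly from the mass formula of Theorem~\ref{thm:formula}, using only the positivity of its summands. The crucial input is that the hypothesis $\Aut(Y/\R)=\{\id_Y\}$ makes the term indexed by $Y$ equal to $1/\#\Aut(Y/\R)=1$.

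First I would note that every summand $1/\#\Aut(Y'/\R)$ is a positive real number, since each automorphism group is finite. The mass formula gives
$$
\sum_{Y'\in\RF(X)}\frac{1}{\#\Aut(Y'/\R)}\leq 1,
$$
and by hypothesis the summand corresponding to $Y$ alone already equals $1$. As all summands are positive, the presence of any second real form would push the total strictly above $1$, contradicting the bound. Hence $Y$ is the unique real form, so $\#\RF(X)=1$, and the inequality above is in fact an equality.

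To deduce that $\Aut(X/\C)$ is abelian, I would apply the final assertion of Theorem~\ref{thm:formula} contrapositively: it states that the inequality is strict whenever $\Aut(X/\C)$ is non-abelian. Since we have just forced the inequality to be an equality, $\Aut(X/\C)$ cannot be non-abelian and is therefore abelian.

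I expect no genuine obstacle here: the corollary is a formal consequence of the theorem. The only subtlety worth flagging is that a single summand equal to $1$ simultaneously saturates the bound---yielding uniqueness of the real form---and excludes the strict-inequality case---yielding commutativity; both conclusions spring from this one observation.
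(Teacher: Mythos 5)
Your proof is correct and is exactly the intended argument: the paper states this corollary without proof as an immediate consequence of Theorem~\ref{thm:formula}, and your deduction---the summand for $Y$ equals $1$, positivity of the remaining summands forces $\#\RF(X)=1$ and saturation of the bound, and the strict-inequality clause then forces $\Aut(X/\C)$ to be abelian---is precisely that consequence. Nothing is missing.
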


\section{A bound using a 2-Sylow subgroup}

Because of the condition $f \circ (c \cdot f)=\id_X$ for $Z^1(G,\Aut(X/{\C}))$, we can often give a better bound.  Indeed, the number of real forms of $X$ with finite $\Aut(X/\C)$ is bounded by the cardinality of the first cohomology set of $G$ in some Sylow $2$-subgroup of $\Aut(X/\C)$.

\begin{proposition}\label{pp:sylow2}
Suppose $\Aut(X/\C)$ is finite. Then the following statements hold:
\begin{enumerate}
  \item\label{pp:sylow2-1} There is a Sylow $2$-subgroup $H\subseteq\Aut(X/\C)$ such that $H$ is a $G$-subgroup of $\Aut(X/\C)$. 
  \item\label{pp:sylow2-2} For such $H$, we have $\# \; \RF(X)\leq \#\; {\rm H}^1(G,H)$.
\end{enumerate}
\end{proposition}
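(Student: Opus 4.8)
Throughout write $A:=\Aut(X/\C)$, and recall that $G=\{\id_\C,c\}$ acts on $A$ through the involution $f\mapsto c\cdot f=\sigma_0\circ f\circ\sigma_0$. The plan is to carry out everything inside the semidirect product $\Gamma:=A\rtimes G$, in which the generator $c$ satisfies $c\,f\,c^{-1}=c\cdot f$ for $f\in A$, so that $\Gamma=A\sqcup Ac$.

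For part~\ref{pp:sylow2-1} I would use a fixed-point count. Since $G$ acts on $A$ by group automorphisms, it permutes the set $\mathrm{Syl}_2(A)$ of Sylow $2$-subgroups. By Sylow's theorem $\#\,\mathrm{Syl}_2(A)$ is odd, whereas $\#G=2$, so every $G$-orbit on $\mathrm{Syl}_2(A)$ has size $1$ or $2$; an odd set cannot be a union of orbits of size $2$ only, so some $H\in\mathrm{Syl}_2(A)$ is fixed, i.e.\ $c\cdot H=H$. This $H$ is the desired $G$-stable Sylow $2$-subgroup.

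For part~\ref{pp:sylow2-2} I would first translate the problem into conjugacy inside $\Gamma$. Writing $\#A=2^am$ with $m$ odd, a Sylow $2$-subgroup of $\Gamma$ has order $2^{a+1}$; because $H$ is $G$-stable, $\tilde H:=H\rtimes G=H\sqcup Hc$ is a subgroup of exactly that order, hence a Sylow $2$-subgroup of $\Gamma$. Next I would set up the dictionary: sending a cocycle $f$ to the element $fc\in\Gamma$ (equivalently, to the real structure $f\circ\sigma_0$) identifies $Z^1(G,A)$ with the set of involutions lying in the coset $Ac$, and the coboundary relation $f'=h^{-1}\,f\,(c\cdot h)$ becomes the conjugation $f'c=h^{-1}(fc)h$ by $h\in A$. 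Thus $\mathrm{H}^1(G,A)$ is the set of $A$-conjugacy classes of involutions in $Ac$, and likewise $\mathrm{H}^1(G,H)$ is the set of $H$-conjugacy classes of involutions in $Hc=\tilde H\cap Ac$, with the inclusion $H\hookrightarrow A$ inducing the restriction map $\mathrm{H}^1(G,H)\to\mathrm{H}^1(G,A)$. Since $\#\,\RF(X)=\#\,\mathrm{H}^1(G,A)$ by the bijections established above, it is enough to prove that this restriction is surjective.

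The core of the argument, and the step I expect to be the main obstacle, is to conjugate an arbitrary involution into $\tilde H$ using only elements of $A$: Sylow theory by itself yields conjugacy in $\Gamma$, whereas cohomology classes are $A$-conjugacy classes. Given an involution $\sigma\in Ac$, the $2$-group $\langle\sigma\rangle$ lies in some Sylow $2$-subgroup of $\Gamma$, so $g^{-1}\sigma g\in\tilde H$ for some $g\in\Gamma$. To replace $g$ by an element of $A$, I would use the factorization $\Gamma=A\tilde H$ (valid since $A\trianglelefteq\Gamma$ and $A$ together with $\tilde H$ generate $\Gamma$): writing $g=at$ with $a\in A$ and $t\in\tilde H$, one obtains $a^{-1}\sigma a=t\,(g^{-1}\sigma g)\,t^{-1}\in\tilde H$, and since conjugation preserves the coset $Ac$ this element lies in $\tilde H\cap Ac=Hc$. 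Hence every involution of $Ac$ is $A$-conjugate to one in $Hc$, so the restriction map is surjective and therefore $\#\,\RF(X)=\#\,\mathrm{H}^1(G,A)\le\#\,\mathrm{H}^1(G,H)$, as claimed.
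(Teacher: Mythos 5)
Your proposal is correct, and its overall route coincides with the paper's: both work in the semidirect product $\Gamma=A\rtimes G$ (the paper's $K=\Aut(X/\C)\rtimes\langle\sigma_0\rangle$), identify cocycles with involutions in the nontrivial coset, apply Sylow conjugacy there, and then face the same key obstacle you flag — Sylow gives conjugacy in $\Gamma$, while cohomology classes are $A$-conjugacy classes. You differ in two local steps, both valid. For part~(1), the paper takes a Sylow $2$-subgroup $S<K$ containing $\sigma_0$ and sets $H=S\cap A$, getting $S=H\rtimes\langle\sigma_0\rangle$ directly; you instead count $G$-orbits on the odd set $\mathrm{Syl}_2(A)$ to find a fixed Sylow subgroup. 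Your argument is the standard fixed-point count and arguably cleaner, though it then needs the (easy) extra observation that $\tilde H=H\rtimes G$ is Sylow in $\Gamma$, which the paper gets for free. For the conjugator correction in part~(2), the paper argues by cases on $\alpha\in K$: if $\alpha=\beta\sigma_0\notin A$, it replaces $\alpha$ by the explicit element $\beta\sigma_0\sigma\in A$, using the involution $\sigma$ itself to absorb the offending factor; you instead invoke the factorization $\Gamma=A\tilde H$ (valid since $A\trianglelefteq\Gamma$ has index $2$ and $c\in\tilde H$), write $g=at$, and note $a^{-1}\sigma a=t(g^{-1}\sigma g)t^{-1}\in\tilde H\cap Ac=Hc$. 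The two fixes are equivalent in content; yours is more structural and generalizes immediately (it is the standard ``Frattini-style'' argument that works for any normal subgroup with a supplement), while the paper's is more elementary and self-contained. Your explicit dictionary — $\mathrm{H}^1(G,A)$ as $A$-classes of involutions in $Ac$, with the natural map $\mathrm{H}^1(G,H)\to\mathrm{H}^1(G,A)$ becoming inclusion of conjugacy data — is exactly the content of the paper's map $\varphi([f])=[f\circ\sigma_0]$, just packaged group-theoretically, and your surjectivity conclusion gives the desired bound $\#\,\RF(X)\le\#\,\mathrm{H}^1(G,H)$ in the same way.
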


\begin{proof}
Let $K:=\Aut(X/\C)\rtimes \langle \sigma_0\rangle$. By the Sylow theorem, there is a Sylow $2$-subgroup $S<K$ such that $\sigma_0\in S$. Let $H:=S\cap \Aut(X/\C)$.  Clearly, $H$ is a Sylow $2$-subgroup of $\Aut(X/\C)$ and $H\cup H\sigma_0=S$. Then, we have $S=H\rtimes \langle \sigma_0\rangle$, which implies that $H$ is a $G$-subgroup of $\Aut(X/\C)$ and it proves~\eqref{pp:sylow2-1}.

We may identify the set of $1$-cocycles of $G$ in $H$ with  a subset of $H$: 
$$
Z^1(G,H)=\{f\in H\mid f\circ (c\cdot f)=\id_X\}.
$$ 
We define a map
\begin{equation}\label{eq1}
\varphi\colon {\rm H}^1(G,H)\longrightarrow \overline{\RS}(X), \quad [f]\longmapsto [f\circ \sigma_0].
\end{equation}
Indeed, $\varphi$ is well defined. If $[f]=[f^\prime]\in {\rm H}^1(G,H)$, then there is an element $h\in H$ such that $h^{-1}f\, (c\cdot h)=f^\prime$, that is, $h^{-1}f\sigma_0 h\sigma_0=f^\prime$.  Thus, we have $h^{-1}f\sigma_0 h=f^\prime\sigma_0$ and $[f\sigma_0]=[f^\prime \sigma_0]\in \overline{\RS}(X)$.  In fact, $\varphi$ factors via the natural map ${\rm H}^1(G,H) \to {\rm H}^1(G,\Aut(X/\C))$.

Next we show that $\varphi$ is surjective. Let $[\sigma]\in \overline{\RS}(X)$.  Since $\sigma\in K\setminus \Aut(X/\C)$ is of order $2$, by the Sylow theorem again, there exists an $\alpha\in K$ such that $\sigma^\prime:=\alpha \sigma\alpha^{-1}\in S$.  Note that $\sigma^\prime \sigma_0\in H$ and $[\sigma^\prime]=\varphi([\sigma^\prime \sigma_0])\in {\Im}(\varphi)$.  So it suffices to show $[\sigma]=[\sigma^\prime]\in \overline{\RS}(X)$.  If $\alpha\in \Aut(X/\C)$, then clearly $[\sigma^\prime]=[\sigma]\in \overline{\RS}(X)$.  On the other hand, if $\alpha\notin \Aut(X/\C)$, then $\alpha=\beta\sigma_0$ for some $\beta\in \Aut(X/\C)$. Then
$$
\sigma^\prime=\beta\sigma_0\sigma\sigma_0 \beta^{-1}=(\beta\sigma_0\sigma)\sigma (\beta\sigma_0\sigma)^{-1},
$$
which implies $[\sigma^\prime]=[\sigma]\in \overline{\RS}(X)$ since $\beta\sigma_0\sigma\in \Aut(X/\C)$. 

By the surjectivity of $\varphi$ and the equality $\# \RF(X)=\# \overline{\RS}(X)$, we have $\#\; \RF(X)\leq \#\; {\rm H}^1(G,H)$. This completes the proof of~\eqref{pp:sylow2-2}.
\end{proof}

In the setting of Proposition~\ref{pp:sylow2}, the Sylow $2$-subgroup $H$ acts on $Z^1(G,H)$ via $f \mapsto h^{-1}f (c\cdot h)$, and the space of orbits is ${\rm H}^1(G,H)$. We then count orbits to get the following result.

\begin{theorem}\label{pp:sylow3}
Suppose $\Aut(X/{\C})$ is finite and $H\subset \Aut(X/\C)$ is a Sylow $2$-subgroup preserved by $G$ as in Pro\-po\-si\-tion~{\rm\ref{pp:sylow2}}. Then we have
$$
\sum_{[Y]\in {\rm H}^1(G,H)}  \frac{1}{\#(\Aut(Y/{\R})\cap H)} = \frac{\# Z^1(G,H)}{\# H} \leq 1\, .
$$
Moreover, if\, $H$ is not abelian, then the inequality is strict. 
\end{theorem}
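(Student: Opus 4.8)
The plan is to repeat the orbit-counting argument of Theorem~\ref{thm:formula} almost verbatim, but carried out inside the $G$-stable Sylow $2$-subgroup $H$ instead of the full group $\Aut(X/\C)$. First I would set up the (right) action of $H$ on $Z^1(G,H)=\{f\in H\mid f\circ(c\cdot f)=\id_X\}$ by $(f,h)\mapsto h^{-1}\circ f\circ(c\cdot h)$. This is the restriction of the action used in Theorem~\ref{thm:formula}, and it is well defined precisely because $H$ is a $G$-subgroup of $\Aut(X/\C)$ (so $c\cdot h\in H$ whenever $h\in H$), which is guaranteed by Proposition~\ref{pp:sylow2}\eqref{pp:sylow2-1}. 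By the definition of the cohomology set, the orbit space of this action is exactly ${\rm H}^1(G,H)$.

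The key computation is the stabilizer of a cocycle $f\in Z^1(G,H)$. As in Theorem~\ref{thm:formula}, the equation $h^{-1}\circ f\circ(c\cdot h)=f$ rewrites, using $c\cdot h=\sigma_0\circ h\circ\sigma_0$, as $h\circ(f\circ\sigma_0)=(f\circ\sigma_0)\circ h$; hence the stabilizer is the centralizer $C_H(f\circ\sigma_0)$. Writing $Y=X/\langle f\circ\sigma_0\rangle$ for the real form attached to the real structure $f\circ\sigma_0$, the identification $\Aut(Y/\R)\cong C_{\Aut(X/\C)}(f\circ\sigma_0)$ recalled before Theorem~\ref{thm:formula} gives $C_H(f\circ\sigma_0)=C_{\Aut(X/\C)}(f\circ\sigma_0)\cap H=\Aut(Y/\R)\cap H$. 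By orbit-stabilizer the orbit of $f$ therefore has size $\#H/\#(\Aut(Y/\R)\cap H)$, and summing these orbit sizes over the classes $[Y]\in{\rm H}^1(G,H)$ yields $\#Z^1(G,H)=\sum_{[Y]}\#H/\#(\Aut(Y/\R)\cap H)$. Dividing by $\#H$ gives the stated mass formula, and the bound $\le 1$ follows at once from $Z^1(G,H)\subseteq H$.

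For the strict inequality when $H$ is non-abelian I would reuse the commutator observation from Theorem~\ref{thm:formula}: since $c$ acts on $H$ by the group automorphism $h\mapsto\sigma_0\circ h\circ\sigma_0$, any $f,g\in Z^1(G,H)$ satisfy $c\cdot f=f^{-1}$ and $c\cdot g=g^{-1}$, so that $fg\in Z^1(G,H)$ if and only if the commutator $f\,g\,f^{-1}g^{-1}$ is trivial. Thus $Z^1(G,H)=H$ would force every commutator in $H$ to vanish, i.e.\ $H$ abelian; contrapositively, if $H$ is non-abelian then $Z^1(G,H)\subsetneq H$ and the inequality is strict. The one place demanding genuine care — and the only real departure from Theorem~\ref{thm:formula} — is the stabilizer identification: one must note that the denominator is $\#(\Aut(Y/\R)\cap H)$ and \emph{not} $\#\Aut(Y/\R)$, because we centralize $f\circ\sigma_0$ only within $H$. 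Beyond this bookkeeping I anticipate no serious obstacle, since the $G$-stability of $H$ supplied by Proposition~\ref{pp:sylow2} is exactly what legitimizes the restricted action and the cohomology set ${\rm H}^1(G,H)$.
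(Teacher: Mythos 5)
Your proposal is correct and matches the paper's proof essentially step for step: the same right action $(f,h)\mapsto h^{-1}\circ f\circ(c\cdot h)$ of $H$ on $Z^1(G,H)$ with orbit space ${\rm H}^1(G,H)$, the same identification of the stabilizer of $f$ with $\Aut(Y/\R)\cap H$ as the centralizer of $f\circ\sigma_0$ in $H$ for $Y=X/\langle f\circ\sigma_0\rangle$, and the same commutator obstruction $fgf^{-1}g^{-1}$ to deduce strictness when $H$ is non-abelian. Your cautionary remark that the denominator is $\#(\Aut(Y/\R)\cap H)$ rather than $\#\Aut(Y/\R)$ is exactly the point the paper's proof also makes.
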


\begin{proof}
We let $H$ act on $Z^1(G,H)$ via $Z^1(G,H) \times H \to Z^1(G,H)$, $(f,h) \mapsto h^{-1}f(c \cdot h)$.  Then the set of orbits can be identified with ${\rm H}^1(G,H)$.  The stabilizer of $f\in Z^1(G,H)$ is $\{h \in H\mid h^{-1}f \sigma_0h=f\sigma_0\}$ and can be identified with $\Aut(Y/{\R})\cap H$ with $Y$ the real form $X/\langle f\sigma_0\rangle$.  The number of elements in $Z^1(G,H)$ equals the sum of the lengths of the orbits, and the length of an orbit defined by $[Y]\in {\rm H}^1(G,H)$ is $\# H/\# \Aut(Y/{\R})\cap H$.  We thus get $\#Z^1(G,H)=\sum_{[Y]\in {\rm H}^1(G,H)} \# H/\# \Aut(Y/{\R})\cap H$. Dividing by $\# H$ gives the equality. Since elements of $Z^1(G,H)$ can be identified with elements of $H$, the inequality follows.  The strict inequality for non-abelian $H$ follows in exactly the same manner as in the proof of Theorem \ref{thm:formula}: if $f$ and $g$ are in $Z^1(G,H)$ (viewed as a subset of $H$), then $fg\in Z^1(G,H)$ if and only if $fgf^{-1}g^{-1}$ is trivial.  So if $\# H=\# Z^1(G,H)$, then $H$ must be abelian.
\end{proof}

\begin{remark}
The sum in Theorem \ref{pp:sylow3} is over elements $[Y]\in {\rm H}^1(G,H)$, not over $[Y] \in {\rm H}^1(G,\Aut(X/\C))$; but the map $\varphi\colon {\rm H}^1(G,H) \to {\rm H}^1(G,\Aut(X/\C)) \cong \overline{\RS}(X)$ defined in~\eqref{eq1} is surjective. Thus we have an inequality
$$
\sum_{[Y]\in R}\frac{1}{\#(\Aut(Y/{\R})\cap H)} \leq \frac{\# Z^1(G,H)}{\# H}
$$
with $R\subset {\rm H}^1(G,H)$ a subset such that $\varphi_{|R}\colon R\to \overline{\RS}(X)$ is a bijection. If one has information on the cardinality of the fibres of $\varphi$, one can sharpen Proposition \ref{pp:sylow2}\eqref{pp:sylow2-2} and Theorem \ref{pp:sylow3}. A description of the fibres is given in \cite[Section~I.5.4, Corollary~2]{Se02}.
\end{remark}

\begin{corollary}
Let $X$ be a complex quasi-projective variety with finite automorphism group and $Y$ a real form.  If $\#\Aut(Y/\R)$ is odd, then $\# \Aut(X/\C)$ is odd and $\# {\RF}(X)=1$.
\end{corollary}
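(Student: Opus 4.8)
The plan is to reduce everything to showing that the relevant Sylow $2$-subgroup is trivial. Using the given real form $Y$ as the base real form $Y_0$, we may take $\sigma_0$ to be the real structure attached to $Y$, so that $\Aut(Y/\R)=C_{\Aut(X/\C)}(\sigma_0)$. By Proposition~\ref{pp:sylow2}\eqref{pp:sylow2-1} choose a Sylow $2$-subgroup $H\subseteq\Aut(X/\C)$ that is a $G$-subgroup, i.e.\ stable under conjugation by $\sigma_0$. I claim the whole statement follows once $H=\{\id\}$: indeed $\#\Aut(X/\C)=\#H\cdot[\Aut(X/\C):H]$ is then odd, while Proposition~\ref{pp:sylow2}\eqref{pp:sylow2-2} gives $\#\RF(X)\le\#{\rm H}^1(G,\{\id\})=1$, and since $Y\in\RF(X)$ we conclude $\#\RF(X)=1$.

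It remains to prove $H=\{\id\}$, and this is the heart of the argument. Since $H$ is $G$-stable, the order-$2$ group $\langle\sigma_0\rangle$ acts on $H$ by conjugation, and its fixed-point subgroup is exactly $C_H(\sigma_0)=H\cap C_{\Aut(X/\C)}(\sigma_0)=H\cap\Aut(Y/\R)$. On the one hand $C_H(\sigma_0)$ is a subgroup of the $2$-group $H$, so its order is a power of $2$; on the other hand it is a subgroup of $\Aut(Y/\R)$, whose order is odd by hypothesis. Hence $C_H(\sigma_0)=\{\id\}$. I would now invoke the standard fixed-point count for a $p$-group acting on a $p$-group: decomposing $H$ into $\langle\sigma_0\rangle$-orbits, each of size $1$ or $2$, yields $\#H\equiv\#C_H(\sigma_0)\pmod 2$. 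If $H$ were nontrivial, then $\#H$ would be even, forcing $\#C_H(\sigma_0)$ to be even and in particular $C_H(\sigma_0)\ne\{\id\}$, a contradiction. Therefore $H=\{\id\}$, which completes the proof.

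The only genuinely nontrivial step I expect is the last congruence $\#H\equiv\#C_H(\sigma_0)\pmod 2$, reflecting the fact that an involution acting on a nontrivial $2$-group must fix some nonidentity element; every other step is a purely formal application of Proposition~\ref{pp:sylow2}. As an alternative to the final clean-up one can feed $C_H(\sigma_0)=\{\id\}$ into Theorem~\ref{pp:sylow3}: the trivial class $[\id]\in{\rm H}^1(G,H)$, which corresponds to $Y$, already contributes the term $1/\#(\Aut(Y/\R)\cap H)=1$ to a sum bounded above by $1$, so it must be the only class, giving $\#\RF(X)=1$ directly.
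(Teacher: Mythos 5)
Your proof is correct, but it takes a genuinely different route from the paper's. The paper argues through its mass formula: taking $Y_0=Y$ as the base real form, the term of the sum in Theorem~\ref{pp:sylow3} indexed by the trivial class is $1/\#(\Aut(Y/\R)\cap H)=1$ (resting on the same odd-order-meets-$2$-group observation you make), so the sum, being bounded by $1$ with positive terms, has exactly one term; this yields $\#Z^1(G,H)=\#H$ and $\#{\rm H}^1(G,H)=1$, after which the paper invokes the commutator obstruction from the proof of Theorem~\ref{thm:formula} to conclude that $H$ is abelian with $c$ acting by $h\mapsto h^{-1}$, and finally that elements of order $2$ in $H$ would produce extra cohomology classes, forcing $H=\{\id\}$. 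You bypass both the mass formula and the abelian/inversion analysis: from $C_H(\sigma_0)=H\cap\Aut(Y/\R)=\{\id\}$ you deduce $\#H\equiv\#C_H(\sigma_0)=1\pmod 2$ by counting orbits of the involution $h\mapsto\sigma_0 h\sigma_0$ on $H$, and since $H$ is a $2$-group this gives $H=\{\id\}$ outright (indeed you could skip the contradiction framing: odd order for a $2$-group immediately means trivial). This is the elementary fact that an involution acting without nontrivial fixed points cannot act on a nontrivial $2$-group. Your argument is shorter and more self-contained, needing only Proposition~\ref{pp:sylow2} plus pure group theory, whereas the paper's version showcases its Theorem~\ref{pp:sylow3}; fittingly, your closing ``alternative'' --- feeding $\Aut(Y/\R)\cap H=\{\id\}$ into Theorem~\ref{pp:sylow3} to force a one-term sum --- is precisely the first step of the paper's own proof.
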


\begin{proof}
The left-hand side of the equality in Theorem \ref{pp:sylow3} has only one term; hence we see $\# Z^1(G,H)=\# H$ and $\# {\rm H}^1(G,H)=1$. Since the product of two elements of $Z^1(G,H)$ lies in $Z^1(G,H)$, we see as at the end of the proof of Theorem \ref{thm:formula} that $H$ is abelian and $c(h)=h^{-1}$ for all $h\in H$.  But each element of $H$ of order $2$ defines an element of $Z^1(G,H)={\rm H}^1(G,H)$; hence $H$ is trivial.
\end{proof}

\begin{corollary}
If\, $Z$ is a complex quasi-projective variety with finite $\Aut(Z/\C)$ of odd order, then $Z$ has at most one real form, up to an isomorphism over $\Spec \R$.
\end{corollary}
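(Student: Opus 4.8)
The plan is to deduce the statement directly from Proposition~\ref{pp:sylow2}. The key observation is purely group-theoretic: if $\#\Aut(Z/\C)$ is odd, then every Sylow $2$-subgroup of $\Aut(Z/\C)$ is trivial.

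First I would dispose of the degenerate case. If $Z$ admits no real form over $\R$ at all, then it has at most one real form and there is nothing to prove. So I may assume that $Z$ has at least one real form, fix one of them as the base form $Y_0$, and place $Z$ in the role of $X$ in the framework of Section~2. This makes available the identifications $\RF(Z)\leftrightarrow\overline{\RS}(Z)\leftrightarrow {\rm H}^1(G,\Aut(Z/\C))$ together with the fixed real structure $\sigma_0$ needed to define the $G$-action and the cocycle set.

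Next I would invoke Proposition~\ref{pp:sylow2}: there is a Sylow $2$-subgroup $H\subseteq\Aut(Z/\C)$ which is a $G$-subgroup, and for it one has $\#\,\RF(Z)\leq \#\,{\rm H}^1(G,H)$. Since $\#\Aut(Z/\C)$ is odd, this Sylow $2$-subgroup is trivial, $H=\{\id_Z\}$. Consequently $Z^1(G,H)=\{\id_Z\}$ consists of the single element $\id_Z$ (which trivially satisfies $f\circ(c\cdot f)=\id_Z$), so ${\rm H}^1(G,H)$ is a one-point set and $\#\,{\rm H}^1(G,H)=1$. Combining with the inequality gives $\#\,\RF(Z)\leq 1$, which is the assertion.

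The argument is essentially immediate once the machinery of Section~2 and Proposition~\ref{pp:sylow2} is in place; no substantive difficulty arises. The only point requiring attention is the bookkeeping around the base form, since the cohomological description of $\RF(Z)$ presupposes a fixed real structure $\sigma_0$, so the case of no real form must be separated out at the start. As an alternative route that avoids fixing a Sylow subgroup, one could argue from the preceding corollary: for any real form $Y$ of $Z$, the group $\Aut(Y/\R)$ is identified with the centralizer $C_{\Aut(Z/\C)}(\sigma)$, hence is a subgroup of the odd-order group $\Aut(Z/\C)$, so $\#\Aut(Y/\R)$ is odd, and that corollary then yields $\#\,\RF(Z)=1$.
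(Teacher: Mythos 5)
Your proof is correct and matches the paper's intended derivation: the paper states this corollary without proof as an immediate consequence of the preceding material, and your main route (trivial Sylow $2$-subgroup plus Proposition~\ref{pp:sylow2}) as well as your alternative route (via the preceding corollary, since $\Aut(Y/\R)\cong C_{\Aut(Z/\C)}(\sigma)$ has odd order) are exactly the arguments the paper leaves to the reader. Your explicit handling of the case where $Z$ has no real form, which the cohomological framework of Section~2 tacitly excludes by fixing $Y_0$, is a careful and welcome addition.
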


\section{An application: Real forms of plane curves}
We prove a bound on the number of real forms of a plane curve. We start with remarks on the cohomology of $G$-sets.

Let $H$ be a finite group, with automorphism group $\Aut(H)$, and let $\phi\colon G\rightarrow \Aut(H)$ be a group homomorphism. Then we may view $H$ as a $G$-group via $\phi$, and the first cohomology set ${\rm H}^1(G, H)$ is defined as in \cite{Se02}.  We sometimes denote ${\rm H}^1(G,H)$ by ${\rm H}^1(G, H_\phi)$ to emphasize that the underlying action is $\phi$.

\begin{definition}
Let $H$ be a finite group. We use $m(H)$ to denote the maximal cardinality of ${\rm H}^1(G, H_\phi)$ when $\phi$ runs over all possible group homomorphisms $\phi\colon G\rightarrow \Aut(H)$.  We denote the minimal number of generators of a finite group $H$ by $l(H)$.
\end{definition}

Since $G\cong \Z/2\Z$, the set of group homorphisms from $G$ to $\Aut(H)$ corresponds bijectively to the set of elements in $\Aut(H)$ of order at most $2$.  If two homomorphisms $\phi_i\colon G\rightarrow \Aut(H)$ $(i=1, 2)$ have the property that $\phi_1(c)$ and $\phi_2(c)$ are conjugate in $\Aut(H)$, then $\#\; {\rm H}^1(G, H_{\phi_1})=\#\; {\rm H}^1(G, H_{\phi_2}).$

\begin{lemma}\label{lem:ab}
Let $H$ be a finite abelian group. Then $m(H)=2^{l(H_2)}$, where $H_2$ is a Sylow $2$-subgroup of\, $H$.
\end{lemma}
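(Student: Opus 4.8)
The plan is to reduce to the $2$-primary part of $H$ and then to identify $\mathrm{H}^1(G,H_\phi)$ as an explicit subquotient of $H$ that one can bound by counting generators. Recall that a homomorphism $\phi\colon G\to\Aut(H)$ is the same datum as an element $\tau:=\phi(c)\in\Aut(H)$ with $\tau^2=\id$, and that since $G\cong\Z/2\Z$ is cyclic one has
$$
\mathrm{H}^1(G,H_\phi)=\ker(1+\tau)\big/\operatorname{im}(1-\tau),
$$
where I write $1+\tau$ and $1-\tau$ for the endomorphisms $h\mapsto h+\tau(h)$ and $h\mapsto h-\tau(h)$ of the (additively written) abelian group $H$; here the numerator is $Z^1(G,H_\phi)$ and the denominator is $B^1(G,H_\phi)$.

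First I would reduce to a $2$-group. Writing $H=\prod_p H_p$ as the product of its $p$-primary components, each $H_p$ is characteristic, hence preserved by $\tau$, so $\mathrm{H}^1(G,H_\phi)=\prod_p \mathrm{H}^1(G,(H_p)_{\phi})$. For odd $p$ the order of $H_p$ is prime to $\#G=2$, so $\mathrm{H}^1(G,H_p)=0$; thus only $H_2$ contributes and the whole computation of $m(H)$ takes place inside $H_2$. Conversely, any involution of $H_2$ extends (by the identity on the odd part) to one of $H$, so $m(H)=\max_\tau \#\,\mathrm{H}^1(G,(H_2)_\tau)$ as $\tau$ runs over automorphisms of $H_2$ of order dividing $2$.

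The heart of the argument is the upper bound $\#\,\mathrm{H}^1(G,(H_2)_\tau)\le 2^{l(H_2)}$, valid for every such $\tau$. The key observation is that $(1+\tau)(1-\tau)=1-\tau^2=0$, so $\operatorname{im}(1-\tau)\subseteq\ker(1+\tau)$ and $\mathrm{H}^1$ is genuinely a quotient of the \emph{group} $\ker(1+\tau)$; moreover $2=(1+\tau)+(1-\tau)$ shows $2\cdot\ker(1+\tau)\subseteq\operatorname{im}(1-\tau)$, so $\mathrm{H}^1$ is elementary abelian and in fact a quotient of $\ker(1+\tau)/2\ker(1+\tau)$. Therefore $\dim_{\F_2}\mathrm{H}^1\le l(\ker(1+\tau))$, the minimal number of generators of the subgroup $\ker(1+\tau)\subseteq H_2$. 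Since for a finite abelian $2$-group the minimal number of generators equals the $\F_2$-dimension of its $2$-torsion, and the $2$-torsion of a subgroup is contained in that of the ambient group, one gets $l(\ker(1+\tau))\le l(H_2)$, hence $\#\,\mathrm{H}^1\le 2^{l(H_2)}$. Finally I would check that the bound is attained by the trivial action $\phi=\triv$: then $1+\tau$ is multiplication by $2$ and $1-\tau=0$, so $\mathrm{H}^1(G,H_{\triv})=\operatorname{Hom}(G,H)=H[2]=H_2[2]$, of cardinality $2^{l(H_2)}$.

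I expect the only delicate point to be the bookkeeping in the upper bound: recognizing $\mathrm{H}^1$ as a quotient of $\ker(1+\tau)$ and invoking the inequality $l(B)\le l(A)$ for subgroups $B$ of a finite abelian $2$-group $A$. Everything else — the primary reduction and the vanishing of $\mathrm{H}^1$ for odd $p$ — is routine.
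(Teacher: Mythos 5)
Your proof is correct and follows essentially the same route as the paper: both identify $\mathrm{H}^1(G,H_\phi)$ with $\ker(1+\tau)/\operatorname{im}(1-\tau)$, observe (via $2=(1+\tau)+(1-\tau)$, which is the paper's computation $2f=(c-1)(-f)$) that this quotient has exponent at most $2$, bound its $\F_2$-rank by $l(H_2)$, and realize the bound with the trivial action, for which $\mathrm{H}^1(G,H_{\triv})=H[2]=H_2[2]$. The only difference is organizational and harmless: you reduce to the $2$-primary part up front via the primary decomposition and the coprime-order vanishing of $\mathrm{H}^1$, whereas the paper works inside all of $H$ and only at the end passes to Sylow $2$-subgroups of $Z^1(G,H)$ and of $Z^1(G,H)/K$ to run the same generator count.
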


\begin{proof}
For the trivial action of $G$ on $H$, the cardinality of ${\rm H}^1(G, H_{\triv})$ is equal to the number of conjugacy classes of elements of order at most $2$ in $H$.  The group $H_2$ is isomorphic to $\Z/d_1\Z\times \cdots\times \Z/d_r\Z$, where $d_i$ divides $d_{i+1}$ for each $1 \leq i\leq r-1$, and the elements of order at most $2$ are obtained by taking the elements of order at most $2$ in each $\Z/d_i\Z$.  This implies $\# \;{\rm H}^1(G, H_{\triv})=2^{l(H_2)}$.  Thus we have the lower bound $m(H)\ge 2^{l(H_2)}$.

Let $\phi\colon G\rightarrow \Aut(H)$ be a group homomorphism. Since $H$ is abelian, we write $H$ additively, and then
$$
Z^1(G, H)=\{f\in H\mid \; 0=c\cdot f+f=(c+1)\cdot f\}
$$
is a subgroup of $H$, where $c\cdot x=\phi(c)(x)$ for any $x\in H$.  Moreover, ${\rm H}^1(G,H)=Z^1(G,H)/K$, where $K=\{(c-1)h\mid h\in H \}$. The quotient group $Z^1(G,H)/K$ is of exponent at most $2$; in fact, if $f\in Z^1(G,H)$, then $2f=(c-1)(-f)\in K$.  We denote by $(Z^1(G,H)/K)_2$ (resp.\ $Z^1(G,H)_2$) a Sylow $2$-subgroup of $Z^1(G,H)/K$ (resp.\ $Z^1(G,H)$). Note that $Z^1(G,H)_2$ is mapped surjectively to $(Z^1(G,H)/K)_2$ under the natural projection $Z^1(G,H)\rightarrow Z^1(G,H)/K$.  Then we have
$$
l(Z^1(G,H)/K)=l((Z^1(G,H)/K)_2)\le l(Z^1(G,H)_2)\le l(H_2)\, .
$$
Thus, $\# \; {\rm H}^1(G,H)=\# \; Z^1(G,H)/K=2^{l(Z^1(G,H)/K)} \le 2^{l(H_2)}$; 
this gives the upper bound $m(H)\le 2^{l(H_2)}$. We conclude $m(H)= 2^{l(H_2)}$.
\end{proof}

Recall that a subgroup $K$ of a group $H$ is called a {\it characteristic subgroup} if $\varphi(K)=K$ for every $\varphi \in \Aut(H)$. Clearly, a characteristic subgroup is a normal subgroup.

\begin{lemma}\label{lem:charsub}
Let $K$ be a characteristic subgroup of a finite group $H$. Then
$$
m(H)\le m(K)\,  m(H/K).
$$
\end{lemma}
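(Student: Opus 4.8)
The plan is to exploit the short exact sequence of $G$-groups attached to the characteristic subgroup $K$, together with the exactness and fibre-counting properties of non-abelian cohomology. First I would fix a homomorphism $\phi\colon G\rightarrow\Aut(H)$ that realizes $m(H)$, so that $\#\,{\rm H}^1(G,H_\phi)=m(H)$. Because $K$ is characteristic, every automorphism in the image of $\phi$ preserves $K$, and hence induces an automorphism of $K$ and of $H/K$. Thus $\phi$ restricts to a homomorphism $\phi_K\colon G\rightarrow\Aut(K)$ and descends to $\bar\phi\colon G\rightarrow\Aut(H/K)$, giving a short exact sequence of $G$-groups
$$
1\longrightarrow K\longrightarrow H\longrightarrow H/K\longrightarrow 1.
$$

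Next I would invoke the exact sequence of pointed sets in non-abelian cohomology (Serre, \emph{loc.\ cit.}), which yields a map of pointed sets
$$
p\colon {\rm H}^1(G,H)\longrightarrow {\rm H}^1(G,H/K).
$$
The cardinality of ${\rm H}^1(G,H_\phi)$ is the sum of the cardinalities of the fibres of $p$ over the points of its image. Since that image is contained in ${\rm H}^1(G,(H/K)_{\bar\phi})$, whose cardinality is at most $m(H/K)$ by definition, there are at most $m(H/K)$ nonempty fibres. It therefore suffices to bound each fibre by $m(K)$.

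To bound a fibre, I would use the twisting description of the fibres referenced in the preceding remark, namely \cite[Section~I.5.4, Corollary~2]{Se02}: choosing a cocycle $b\in Z^1(G,H)$ in a given nonempty fibre, that fibre (i.e.\ the fibre of $p$ over $p([b])$) is identified, after twisting by $b$, with the image of the map ${\rm H}^1(G,{}_bK)\rightarrow {\rm H}^1(G,{}_bH)$ coming from the twisted exact sequence $1\rightarrow{}_bK\rightarrow{}_bH\rightarrow{}_b(H/K)\rightarrow1$; in particular its cardinality is at most $\#\,{\rm H}^1(G,{}_bK)$. The crucial observation is that the twist ${}_bK$ has the \emph{same} underlying group $K$, with the $G$-action modified to $s\mapsto \mathrm{inn}(b_s)\circ\phi_K(s)$, where $\mathrm{inn}(b_s)$ denotes conjugation by $b_s$. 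This is again a homomorphism $G\rightarrow\Aut(K)$: conjugation by $b_s$ is an automorphism of $K$ because $K$ is normal in $H$, and the map is a $G$-action since it is produced by twisting a genuine $G$-action. Hence $\#\,{\rm H}^1(G,{}_bK)=\#\,{\rm H}^1(G,K_\psi)\le m(K)$ for the associated action $\psi$, directly from the definition of $m(K)$. Combining the two estimates, every nonempty fibre has cardinality at most $m(K)$, and summing over the at most $m(H/K)$ of them gives $m(H)=\#\,{\rm H}^1(G,H_\phi)\le m(K)\,m(H/K)$.

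The main obstacle is precisely this twisting step. A priori the fibres of $p$ need not be cohomology sets at all, so one must pass through the twisted exact sequence to recognize each fibre as (a quotient of) ${\rm H}^1(G,{}_bK)$, and then verify that the twisted form ${}_bK$ is again of the shape $K_\psi$ for some $\psi\colon G\rightarrow\Aut(K)$, so that the bound $m(K)$ applies unchanged. Both points rest on the normality of $K$, which is guaranteed here since characteristic subgroups are normal; this is the feature that makes conjugation by elements of $H$ act on $K$ by automorphisms and keeps the twisted action valued in $\Aut(K)$.
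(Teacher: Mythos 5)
Your proof is correct and follows essentially the same route as the paper: the paper also fixes an action $\phi$, uses that $K$ is characteristic to get the exact sequence ${\rm H}^1(G,K)\to{\rm H}^1(G,H)\to{\rm H}^1(G,H/K)$, and bounds the fibres by $m(K)$ via the twisting result \cite[Section~I.5.5, Corollary~2]{Se02}. The only difference is that you unpack the content of that citation explicitly --- in particular the key verification that the twisted group ${}_bK$ is again of the form $K_\psi$ for a genuine homomorphism $\psi\colon G\to\Aut(K)$, which is exactly why the uniform bound $m(K)$ applies to every fibre --- whereas the paper delegates this to Serre.
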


\begin{proof}
Let $\phi\colon G\rightarrow \Aut(H)$ be a group homomorphism.  Then $H$ is a $G$-group and $K$ is a $G$-subgroup of $H$ since $K$ is a characteristic subgroup.  The resulting exact sequence of cohomology sets
$$
{\rm H}^1(G,K)\longrightarrow {\rm H}^1(G,H)\longrightarrow {\rm H}^1(G, H/K)
$$
implies by \cite[Section I.5.5, Corollary 2]{Se02} that
$$
\#\; {\rm H}^1(G,H)\le m(K) \#\; {\rm H}^1(G, H/K)\le m(K)\, m(H/K)\, .  
$$
From this, we conclude that $m(H)\le m(K)\, m(H/K).$
\end{proof}

Using this lemma, we can compute $m(D_{2n})$ for the dihedral group $D_{2n}$ of order $2n$.

\begin{lemma}\label{lem:dih} Let $n\geq 3$. Then 
  \[
  m(D_{2n})=\begin{cases} 2, & n\equiv 1 \; (\bmod \; 2), \\
4, & n\equiv 0 \; (\bmod \; 2). \\ 
\end{cases} \]
\end{lemma}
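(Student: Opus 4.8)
The plan is to bound $m(D_{2n})$ from above using the characteristic rotation subgroup together with Lemmas~\ref{lem:charsub} and~\ref{lem:ab}, and then to match this bound from below by exhibiting the trivial action. Write $D_{2n}=\langle r,s\mid r^n=s^2=1,\ srs=r^{-1}\rangle$ and let $C:=\langle r\rangle\cong\Z/n\Z$ be the cyclic subgroup of rotations. First I would check that $C$ is a characteristic subgroup of $D_{2n}$ for $n\geq 3$: every reflection has order $2$, and the only rotation of order $2$ is the central element $r^{n/2}$ present when $n$ is even, so the set of elements of order greater than $2$ consists entirely of rotations and contains $r$ (of order $n>2$); hence this set generates exactly $C$ and is preserved by every automorphism, so $C$ is characteristic. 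The quotient $D_{2n}/C$ is isomorphic to $\Z/2\Z$.

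With $C$ characteristic, Lemma~\ref{lem:charsub} gives
$$
m(D_{2n})\leq m(C)\,m(\Z/2\Z).
$$
I would then compute the two factors using Lemma~\ref{lem:ab}. The Sylow $2$-subgroup of $C\cong\Z/n\Z$ is cyclic, trivial when $n$ is odd and nontrivial (hence singly generated) when $n$ is even, so $m(C)=1$ for odd $n$ and $m(C)=2$ for even $n$; likewise $m(\Z/2\Z)=2$. This yields the upper bounds $m(D_{2n})\leq 2$ for odd $n$ and $m(D_{2n})\leq 4$ for even $n$.

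For the matching lower bounds I would take the trivial homomorphism $\phi=\triv$, for which $\#\,{\rm H}^1(G,(D_{2n})_{\triv})$ equals the number of conjugacy classes of elements of order at most $2$ in $D_{2n}$, exactly as in the proof of Lemma~\ref{lem:ab}. Counting these classes is the crux of the argument and the place where the parity of $n$ enters decisively: the involutions are the $n$ reflections, together with the central rotation $r^{n/2}$ when $n$ is even. When $n$ is odd the reflections form a single conjugacy class, giving the two classes $\{1\}$ and the reflection class, so $m(D_{2n})\geq 2$; when $n$ is even the reflections split into two conjugacy classes according to the parity of the exponent of $r$, and together with $\{1\}$ and $\{r^{n/2}\}$ this gives four classes, so $m(D_{2n})\geq 4$. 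Combining with the upper bounds from the previous step gives the claimed equalities. I expect the main obstacle to be the conjugacy-class bookkeeping in the even case---verifying that the reflections genuinely split into exactly two classes and that $r^{n/2}$ is a distinct central involution---since the upper bound is an almost immediate consequence of the two preceding lemmas.
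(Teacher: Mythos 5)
Your proposal is correct and takes essentially the same approach as the paper: the lower bound comes from the trivial action by counting conjugacy classes of elements of order at most $2$ in $D_{2n}$ (two for odd $n$, four for even $n$), and the upper bound comes from applying Lemma~\ref{lem:charsub} to the characteristic rotation subgroup $\Z/n\Z$ together with Lemma~\ref{lem:ab}, giving $m(D_{2n})\le 2\,m(\Z/n\Z)$. Your explicit verification that the rotation subgroup is characteristic simply fills in a detail the paper asserts without proof (there it is identified as the unique cyclic subgroup of order $n$).
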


\begin{proof}
If $n$ is odd (resp.\ even), then the number of conjugacy classes of elements of $H=D_{2n}$ of order at most~$2$ is two (resp.\ four), which implies $m(H)\ge 2$ (resp.\ $4$).  Let $K$ denote the unique cyclic subgroup of $H$ of order $n$.  Since $K$ is a characteristic subgroup, Lemma \ref{lem:charsub} implies that $m(H)\le m(K)\, m(H/K)=2m(K)$. Then $m(K)=1$ (resp.\ $2$) by Lemma \ref{lem:ab}.  From this we have $m(H)\le 2$ (resp.\ $4$). Thus, we have $m(H)=2$ (resp.\ $4$) if $n$ is odd (resp.\ even).
\end{proof}

\begin{example}\label{ex:Q8}
Consider the quaternion group $H=Q_8=\{\pm 1,\pm i,\pm j,\pm k\}$.  It is known that $\Aut(Q_8)$ is isomorphic to the symmetric group $\mathfrak{S}_4$, which has exactly three conjugacy classes of elements of order at most $2$.  Their representatives give three homomorphisms $\phi_t\colon G\rightarrow \Aut(Q_8)$ ($t=1,2,3$) with $\phi_1(c)=\id_{Q_8}$ and $\phi_2(c)$, $\phi_3(c)$ given by
$$
\phi_2(c)\colon   i\longmapsto i,\ j\longmapsto -j,\ k\longmapsto -k, \quad
\phi_3(c)\colon   i\longmapsto j,\ j\longmapsto i,\ k\longmapsto -k.
$$
A computation shows $\#\; {\rm H}^1(G, {H}_{\phi_t})=2,3,1$ for $t=1,2,3$, respectively.  In particular, $m(Q_8)=3>2$, which is the number of conjugacy classes of elements of order at most $2$ in $Q_8$.
\end{example}

Badr and Bars proved that $(\Z/2\Z)^2$ cannot act faithfully on any smooth plane curve of degree $d=5$. By a similar argument, the same result holds for any odd $d\ge 5$.

\begin{lemma}[{cf. \cite[Lemma 10]{BB16}}]\label{lem:odd}
The group $(\Z/2\Z)^2$ is not isomorphic to a subgroup of the automorphism group of any smooth plane curve of odd degree $d\ge 5$.
\end{lemma}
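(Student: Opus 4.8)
The plan is to use the classical fact that for a smooth plane curve $C=\{F=0\}\subset\mathbb P^2$ of degree $d\ge 4$ every automorphism is the restriction of a projective linear transformation, so that $\Aut(C)\subseteq\PGL_3(\C)$ (this is the setting in which \cite{BB16} operates). Suppose, for contradiction, that $(\Z/2\Z)^2$ embeds into $\Aut(C)$ for some smooth $C$ of odd degree $d\ge 5$. Then we obtain a Klein four-subgroup $V\cong(\Z/2\Z)^2$ of $\PGL_3(\C)$ stabilizing $C$, and since $F$ is (up to scalar) the unique defining form, each element of $V$ sends $F$ to a scalar multiple of itself; that is, $F$ is a common eigenvector.

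The first step is to normalize $V$. I claim every Klein four-subgroup of $\PGL_3(\C)$ equals, up to conjugacy, the diagonal $2$-torsion subgroup generated by $\diag(1,1,-1)$ and $\diag(1,-1,1)$. Pick generating involutions $\bar\sigma,\bar\tau\in V$ and lift them to $\sigma,\tau\in\GL_3(\C)$; scaling the lifts we may arrange $\sigma^2=\tau^2=\id$, and their commuting in $\PGL_3$ reads $\sigma\tau\sigma^{-1}\tau^{-1}=\nu\,\id$ for a scalar $\nu$, with $\nu^3=1$ by taking determinants. Thus $\sigma\tau\sigma^{-1}=\nu\tau$, so the eigenvalue set $\{1,-1\}$ of $\tau$ must equal that of $\nu\tau$, namely $\{\nu,-\nu\}$; this forces $\nu\in\{1,-1\}$, and combined with $\nu^3=1$ we get $\nu=1$. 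Hence $\sigma$ and $\tau$ commute already in $\GL_3(\C)$, and being commuting semisimple elements they are simultaneously diagonalizable. After this diagonalization $V$ lies in the group of diagonal classes $\{\diag(\pm1,\pm1,\pm1)\}$ modulo scalars, which is itself a copy of $(\Z/2\Z)^2$; comparing orders, $V$ equals it, and we may take $V=\langle\diag(1,1,-1),\diag(1,-1,1)\rangle$.

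Next I would read off the shape of $F$. The involution $\diag(1,1,-1)$ multiplies the monomial $x^ay^bz^c$ by $(-1)^c$, and $\diag(1,-1,1)$ multiplies it by $(-1)^b$; since $F$ is an eigenvector for each, all monomials occurring in $F$ share the parity of $c$ and the parity of $b$, and hence also of $a=d-b-c$. Because $d$ is odd, $a+b+c$ is odd, so the number of the three exponents that are odd is itself odd, i.e.\ one or three of them. In either case at least one coordinate, say $x$, occurs with an exponent that is always odd, in particular always positive, so that coordinate divides every monomial of $F$.

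The conclusion is then immediate: $F$ is divisible by a linear form, so $C$ contains a line as a component and is therefore reducible; but a reducible plane curve of degree $d\ge 2$ is singular, contradicting the smoothness of $C$. This proves the lemma, and we note that the hypothesis $d\ge 5$ enters only through the linearity of $\Aut(C)$, which requires $d\ge 4$; the parity argument itself works for every odd degree. The main obstacle is the normalization step: a priori $V$ only commutes projectively, and one must rule out a genuine Heisenberg-type obstruction before diagonalizing. This is exactly where the cube-root-of-unity computation is needed, and once $V$ is put in diagonal form the remaining parity bookkeeping is routine.
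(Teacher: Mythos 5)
Your proof is correct and takes essentially the approach the paper relies on: the paper offers no proof of its own, deferring to \cite[Lemma~10]{BB16} ``by a similar argument,'' and that argument is exactly yours---normalize the Klein four-group to $\langle\diag(1,1,-1),\diag(1,-1,1)\rangle\subset\PGL(3,\C)$ (using linearity of $\Aut$ for $d\ge 4$) and run the parity analysis on the invariant form, generalized from $d=5$ to all odd $d$. You also handle correctly the one genuinely delicate point, ruling out the Heisenberg-type obstruction via $\nu^3=1$ combined with the eigenvalue constraint $\nu=\pm1$ coming from the involutions.
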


\begin{lemma}\label{lem:a-ii}
Let $H\subset \GL(2,\C)$ be a finite subgroup of order $2^s$ $(s\ge 3)$. Let $\pi\colon \GL(2,\C)\rightarrow \PGL(2,\C)$ be the natural projection map. If\, $\pi(H)$ is isomorphic to a dihedral group of order $2^{s-1}$, then $m(H)\le 4$.
\end{lemma}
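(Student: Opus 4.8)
The plan is to first pin down the isomorphism type of $H$ and then let the lemmas already proved do the counting. Write $m=2^{s-2}$, so $\pi(H)$ is dihedral of order $2m=2^{s-1}$. The kernel of $\pi|_H$ is $H\cap\{\text{scalar matrices}\}$, and since $\#H/\#\pi(H)=2$ it has order $2$; as the only scalar matrix of order $2$ is $-I$, this kernel is $\{\pm I\}$. Thus $H$ is a central extension of $D_{2^{s-1}}$ by $\{\pm I\}$, and I would split the argument according to whether the inclusion $H\hookrightarrow\GL(2,\C)$ is irreducible. If $H$ acts reducibly, then, being finite and hence completely reducible, $H$ is conjugate into the diagonal torus and so is abelian; but then $\pi(H)$ is an abelian dihedral group, which forces $2^{s-1}\le 4$, i.e.\ $s=3$. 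Since an abelian $2$-subgroup of the diagonal torus needs at most two generators, the only possibility of order $8$ with $\pi(H)$ noncyclic is $H\cong C_4\times C_2$.

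If instead $H$ acts irreducibly, Schur's lemma gives $Z(H)\subseteq\{\text{scalars}\}$, hence $Z(H)=\{\pm I\}$ and $H/Z(H)\cong D_{2^{s-1}}$; that is, $H$ is a $2$-group of maximal class. I would make this explicit by lifting the standard dihedral generators: choosing $R=\diag(\zeta,\zeta^{-1})$ with $\zeta$ a primitive $2m$-th root of unity produces a cyclic subgroup $\langle R\rangle$ of order $2^{s-1}$ with $R^m=-I$ and $\pi(\langle R\rangle)$ the rotation subgroup, while a lift $T$ of a reflection satisfies $T^2\in\{\pm I\}$ and $TRT^{-1}\in\{R^{-1},-R^{-1}\}$. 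The four sign choices yield exactly the dihedral group $D_{2^s}$, the generalized quaternion group $Q_{2^s}$, and the semidihedral group $SD_{2^s}$ (the fourth choice becoming semidihedral after replacing $T$ by the order-$2$ element $TR$). For $s=3$ the same analysis returns $D_8$ and $Q_8$. I expect this classification of $H$ as one of $D_{2^s}$, $Q_{2^s}$, $SD_{2^s}$ (with its two order-$8$ degenerations $D_8,Q_8$ and the reducible case $C_4\times C_2$) to be the main obstacle; once it is in hand, the bound is immediate.

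It remains to bound $m(H)$ in each case. For $D_{2^s}$ with any $s\ge 3$, and for $Q_{2^s},SD_{2^s}$ with $s\ge 4$, the cyclic subgroup $\langle R\rangle$ of order $2^{s-1}$ is the \emph{unique} cyclic subgroup of that order: every element outside $\langle R\rangle$ has order at most $4<2^{s-1}$, as one checks from the relations above. Hence $\langle R\rangle$ is a characteristic subgroup, and Lemma~\ref{lem:charsub} combined with Lemma~\ref{lem:ab} gives
$$
m(H)\le m(\langle R\rangle)\,m(H/\langle R\rangle)=2\cdot 2=4,
$$
since a cyclic $2$-group and $C_2$ each have $m=2$ by Lemma~\ref{lem:ab}. (For $D_{2^s}$ one may alternatively invoke Lemma~\ref{lem:dih} directly, as $2^{s-1}$ is even.)

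The two remaining small groups are handled individually. For $H\cong C_4\times C_2$, Lemma~\ref{lem:ab} gives $m(H)=2^{l(H)}=2^{2}=4$. For $H\cong Q_8$, the cyclic subgroups $\langle i\rangle,\langle j\rangle,\langle k\rangle$ of order $4$ are permuted by $\Aut(Q_8)$, so none is characteristic and the preceding argument does not apply; this is the single case where Example~\ref{ex:Q8} is genuinely needed, and it supplies $m(Q_8)=3$. Collecting the cases, $m(H)\le 4$ for every admissible $H$, which is the assertion of the lemma.
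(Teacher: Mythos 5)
Your proof is correct, and it reaches the paper's bound by a partly different route. Where the paper simply cites the explicit classification of finite $2$-subgroups of $\GL(2,\C)$ (NvdPT08, Theorem~4(4)(a)(ii)) to read off, for $s\ge 4$, that $H$ has a unique cyclic subgroup of order $2^{s-1}$, you derive the structure of $H$ intrinsically: kernel of $\pi|_H$ equal to $\{\pm I\}$, Schur's lemma in the irreducible case to get $Z(H)=\{\pm I\}$, hence $H$ of maximal class, and then a lifting of dihedral generators to land in $\{D_{2^s},Q_{2^s},SD_{2^s}\}$. The endgame is then identical to the paper's: the cyclic subgroup of order $2^{s-1}$ is unique (elements outside it have order at most $4$), hence characteristic, and Lemma~\ref{lem:charsub} plus Lemma~\ref{lem:ab} give $m(H)\le 2\cdot 2=4$; the order-$8$ cases $D_8$, $Q_8$, $\Z/4\Z\times\Z/2\Z$ are settled by Lemma~\ref{lem:dih}, Example~\ref{ex:Q8} and Lemma~\ref{lem:ab}, exactly as in the paper (which also lists all three without pruning). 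Your approach buys self-containedness at the cost of some group theory you only sketch: the claim that the preimage of the rotation subgroup is \emph{cyclic} of order $2^{s-1}$ (rather than $C_{2^{s-2}}\times C_2$) is asserted, not proved; it does hold — the preimage is a central extension of a cyclic group, hence abelian and diagonalizable, and normalization by the lift of a reflection forces the noncyclic option out when $s\ge 4$ (or one can quote the standard classification of maximal-class $2$-groups, using that $\mathrm{class}(H)=\mathrm{class}(H/Z(H))+1$). One small inaccuracy, harmless for the bound: your reducible case is actually vacuous, since an abelian subgroup of $\GL(2,\C)$ is simultaneously diagonalizable and the diagonal torus maps to a one-dimensional torus in $\PGL(2,\C)$, so $\pi(H)$ would be cyclic and could never be the Klein four-group; including $C_4\times C_2$ anyway only overcounts, and $m(C_4\times C_2)=4$ keeps the bound intact.
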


\begin{proof}
A finite group of order $8$ is one of the following groups: $\Z/8\Z$, $\Z/4\Z\times \Z/2\Z$, $D_8$, $Q_8$, $(\Z/2\Z)^3$. Since $(\Z/2\Z)^3$ is not a subgroup of $\GL(2,\C)$, we have $H\cong$ $\Z/4\Z\times \Z/2\Z$, $D_8$, $Q_8$ if $s=3$. Then $m(H)\le 4$ by Lemmas \ref{lem:ab} and \ref{lem:dih} and Example \ref{ex:Q8}.  From now on, we assume $s\ge 4$. By \cite[Theorem 4(4)(a)(ii)]{NPT08}, $H$ is conjugate (in $\GL(2,\C)$) to one of the following three groups:
$$
\left\langle i\begin{pmatrix}\xi_{2^{s-1}}&0\\0&\xi_{2^{s-1}}^{-1}\end{pmatrix}, 
\begin{pmatrix}0&i\\i&0\end{pmatrix}\right\rangle,\quad \left\langle \begin{pmatrix}\xi_{2^{s-1}}&0\\0&\xi_{2^{s-1}}^{-1}\end{pmatrix}, 
\begin{pmatrix}0&1\\1&0\end{pmatrix}\right\rangle, \quad \left\langle \begin{pmatrix}\xi_{2^{s-1}}&0\\0&\xi_{2^{s-1}}^{-1}\end{pmatrix}, 
\begin{pmatrix}0&i\\i&0\end{pmatrix}\right\rangle,
  $$
where $\xi_n$ indicates a primitive $\supth{n}$ root of unity.  From this, we see that $H$ has a unique cyclic subgroup of order $2^{s-1}$, say $N$.  Then $N$ is a characteristic subgroup of $H$ with $H/N\cong \Z/2\Z$. Thus, by Lemma \ref{lem:charsub} we find $m(H)\le 4$.
\end{proof}

\begin{theorem}\label{thm:planecurve}
Let $X$ be a smooth plane curve of degree $d\ge 4$. Then 
$$
\# {\RF}(X) \leq \begin{cases} 2, & d \equiv 1 \, (\bmod \; 2),  \\ 
4, & d \equiv 2 \; (\bmod \;4), \\
8, & d \equiv 0 \; (\bmod  \;4). \\ \end{cases}
$$
\end{theorem}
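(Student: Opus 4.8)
The plan is to reduce the statement to a bound on $m(H)$ for a suitable Sylow $2$-subgroup, and then to run through the classification of automorphism groups of smooth plane curves. Since $d\ge 4$, the group $\Aut(X/\C)\subset\PGL(3,\C)$ is finite, so Proposition~\ref{pp:sylow2} provides a $G$-stable Sylow $2$-subgroup $H$ with $\#\RF(X)\le\#{\rm H}^1(G,H)$. Because the $G$-action on $H$ is conjugation by $\sigma_0$, an element of order at most $2$ in $\Aut(H)$, the definition of $m(H)$ gives $\#{\rm H}^1(G,H)\le m(H)$. Thus it suffices to prove that $m(H)\le 2,4,8$ according as $d$ is odd, $d\equiv 2\pmod 4$, or $d\equiv 0\pmod 4$. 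To control $H$ I would invoke the classification of the possible $\Aut(X/\C)$ (Harui's theorem together with the classification of finite subgroups of $\PGL(3,\C)$): up to conjugacy, $\Aut(X/\C)$ either fixes a point and a line, or is conjugate into the automorphism group of a Fermat or Klein curve, or is one of the primitive groups $A_5$, $A_6$, $\PSL(2,7)$, or a subgroup of the Hessian group of order $216$.

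Next I would dispose of the point-fixing case, which is where Lemma~\ref{lem:a-ii} does its work. If $\Aut(X/\C)$ fixes a point $P$ and a line $L$ with $P\notin L$, then writing its action on $L$ and on $P$ as a representation $\rho$ and a character $\chi$, the twisted map $g\mapsto\chi(g)^{-1}\rho(g)$ is a faithful homomorphism $\Aut(X/\C)\hookrightarrow\GL(2,\C)$. Hence $H$ is a finite $2$-subgroup of $\GL(2,\C)$, whose image $\pi(H)$ in $\PGL(2,\C)$ is cyclic or dihedral. If $\pi(H)$ is cyclic then $H$ is abelian of rank at most $2$, so $m(H)=2^{l(H_2)}\le 4$ by Lemma~\ref{lem:ab}; if $\pi(H)$ is dihedral then $m(H)\le 4$ by Lemma~\ref{lem:a-ii}. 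Either way $m(H)\le 4$ in the point-fixing case, independently of $d$.

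For the remaining cases I would compute Sylow $2$-subgroups directly. Writing $a=v_2(d)$, the Sylow $2$-subgroup of $\Aut(\text{Fermat})=(\Z/d)^2\rtimes\mathfrak{S}_3$ is $H\cong(\Z/2^a)^2\rtimes\Z/2$ with $\Z/2$ acting by the coordinate swap; its commutator subgroup is the characteristic cyclic group $[H,H]\cong\Z/2^a$ with quotient $\Z/2^a\times\Z/2$, so Lemmas~\ref{lem:charsub}, \ref{lem:ab} and~\ref{lem:dih} give $m(H)\le 2\cdot 4=8$, refined to $m(H)=4$ when $a=1$ (where $H\cong D_8$) and $m(H)=2$ when $a=0$. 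The Klein curve has automorphism group of odd order $3(d^2-3d+3)$ for $d>4$, so $H$ is trivial there, while $d=4$ falls under the primitive groups. For the primitive groups one reads off the Sylow $2$-subgroups $(\Z/2)^2$ (for $A_5$), $D_8$ (for $A_6$ and $\PSL(2,7)$) and $Q_8$ (for the Hessian group), for which $m=4,4,3$ respectively by Lemmas~\ref{lem:ab}, \ref{lem:dih} and Example~\ref{ex:Q8}. Thus in every case $m(H)\le 8$, with $m(H)\le 4$ unless we are in the Fermat case with $a\ge 2$, i.e.\ $d\equiv 0\pmod 4$; this already yields the bounds $8$ and $4$ for $d\equiv 0$ and $d\equiv 2\pmod 4$.

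The hardest part is the odd-degree refinement to $m(H)\le 2$, and this is exactly where Lemma~\ref{lem:odd} enters. When $d$ is odd the homology subgroup (the kernel of $\Aut(X/\C)\to\PGL(2,\C)$ in the point-fixing case) has order dividing $d$, hence odd; since $\gcd(\#H,\#N)=1$ the map $H\to\Aut(X/\C)/N\subset\PGL(2,\C)$ is injective, so $H$ is cyclic or dihedral. A dihedral $H$ of order $\ge 4$ would contain a copy of $(\Z/2)^2\subset\Aut(X/\C)$, which Lemma~\ref{lem:odd} forbids for $d\ge 5$; hence $H$ is cyclic and $m(H)=2$. The same lemma rules out $A_5$, $A_6$ and $\PSL(2,7)$ in odd degree, since each contains a Klein four-group, and the Fermat case gives $H=\Z/2$. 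The one genuinely delicate point is the Hessian group, whose Sylow $2$-subgroup $Q_8$ contains no copy of $(\Z/2)^2$ and so escapes Lemma~\ref{lem:odd}; here I would argue that the Hessian group (and its subgroups with quaternion Sylow $2$-subgroup) admits no smooth invariant curve of odd degree, the invariant smooth curves occurring only in even degree. Assembling the three cases gives the stated bounds, and I expect the Hessian/odd-degree exclusion and the precise bookkeeping of the homology order against $v_2(d)$ to be the steps requiring the most care.
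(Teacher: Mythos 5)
Your overall skeleton --- reduce to a bound on $m$ of a $2$-group via Proposition~\ref{pp:sylow2} and then run through Harui's classification --- is the paper's, and your Fermat computation via the characteristic commutator subgroup $[H,H]\cong \Z/2^a\Z$ is a correct alternative to the paper's embedding of $(\Z/d\Z)^2\rtimes\Z/2\Z$ into $\GL(2,\C)$. But the odd-degree case has a genuine gap exactly where you flag it. Because you bound $\#\RF(X)$ by $m$ of a \emph{Sylow $2$-subgroup}, the Hessian-type groups force on you the value $m(Q_8)=3>2$ (Example~\ref{ex:Q8}), and you are left needing the unproved assertion that $H_{216}$ and $(\Z/3\Z)^2\rtimes Q_8$ preserve no smooth plane curve of odd degree $d\ge 5$. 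Lemma~\ref{lem:odd} cannot help here ($Q_8$ has a unique involution, so these groups contain no Klein four-group), and the subgroup $(\Z/3\Z)^2$ only forces $3\mid d$, which leaves $d=9,15,\dots$ open; you sketch no proof. The paper bypasses any such geometric exclusion: in case (c) it does \emph{not} pass to the Sylow subgroup but computes $m(H)=2$ for the full groups, using $\Aut(H)\cong H_{216}\rtimes\Z/2\Z$ and its three conjugacy classes of elements of order at most $2$, the invariance of $\#{\rm H}^1$ under inner twists \cite[Prop.~I.35~bis]{Se02}, and a small $Z^1$-computation inside a $G$-stable Sylow subgroup for the one outer class (carried out in detail for $\mathfrak{A}_5$). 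The moral is that ${\rm H}^1(G,H)$ of the full group can be strictly smaller than ${\rm H}^1$ of its Sylow $2$-subgroup --- the map $\varphi$ of Proposition~\ref{pp:sylow2} is surjective, not bijective --- and for the quaternion-Sylow groups this loss is precisely the difference between $3$ and the required $2$.

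A second, concrete error: your point-fixing claim that $m(H)\le 4$ \emph{independently of $d$} is false as stated, because Lemma~\ref{lem:a-ii} carries the hypothesis that $\pi(H)$ is dihedral of order $2^{s-1}$, \textit{i.e.}\ that $H$ meets the scalars in a group of order exactly $2$, and you apply it without verifying this. Without that hypothesis the conclusion fails: the order-$16$ subgroup of $\GL(2,\C)$ generated by $iI$, $\diag(1,-1)$ and $\begin{psmallmatrix}0&1\\1&0\end{psmallmatrix}$ has $\pi$-image $(\Z/2\Z)^2$ and five conjugacy classes of elements of order at most $2$, namely $\{I\}$, $\{-I\}$, $\{\pm\diag(1,-1)\}$, $\bigl\{\pm\begin{psmallmatrix}0&1\\1&0\end{psmallmatrix}\bigr\}$, $\bigl\{\pm\begin{psmallmatrix}0&i\\-i&0\end{psmallmatrix}\bigr\}$, so already the trivial action gives $m\ge 5$. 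This is exactly why the paper tracks the scalar kernel through $|N|\mid d$: for $d\equiv 2\;(\bmod\;4)$ one gets $4\nmid |N|$, the kernel inside the Sylow subgroup has order at most $2$, and Lemma~\ref{lem:a-ii} genuinely applies to give the bound $4$; for $d\equiv 0\;(\bmod\;4)$ the paper proves only the general claim $m(M)\le 8$ for finite $M\subset\GL(2,\C)$ (via the characteristic center and $M/Z(M)\subset\PGL(2,\C)$), which is where the bound $8$ in the statement comes from. So your $d\equiv 2\;(\bmod\;4)$ case is repairable by the bookkeeping you defer, and your $d\equiv 0\;(\bmod\;4)$ bound survives since $8$ suffices there, but the odd-degree bound $2$ is not obtainable along your route without either proving the Hessian exclusion or switching to the paper's direct computation of $m$ for the full groups in case (c).
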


\begin{proof}
Let $H=\Aut(X/\C)$ and let $H_2$ be a Sylow $2$-subgroup of $H$. We use the results of Harui \cite{Ha19} on automorphism groups of plane curves.

\begin{enumerate}[wide,itemindent=16pt]
\item\label{p4.8-1} Suppose $d$ is odd. Then it suffices to show $m(H)\le 2$. By \cite[Theorem 2.3]{Ha19}, we only need to consider five cases: (a-i), (a-ii), (b-i), (b-ii), (c) as listed there.

(a-i)~ The group $H$ is cyclic. Then $m(H)\le 2$ by Lemma \ref{lem:ab}.

(a-ii)~ The group $H$ fits in the exact sequence 
$$
1\longrightarrow N \longrightarrow H\longrightarrow K\longrightarrow 1,
$$
where $N$ is a cyclic group of order dividing $d$ and $K$ is isomorphic to one of the following groups: $\Z/n\Z$, $D_{2n}$, $\mathfrak{A}_4$, $\mathfrak{S}_4$, $\mathfrak{A}_5$. Thus, $H$ and $K$ have isomorphic Sylow $2$-subgroups. Lemma \ref{lem:odd} does not allow $(\Z/2\Z)^2$ as a subgroup of $H$.  So $K$ is either cyclic or $D_{2n}$ with odd $n$, and thus $H_2$ is a cyclic group.  By Lemma \ref{lem:ab}, we have $m(H)\le m(H_2)\le 2$.

(b-i)~ The group $H$ is isomorphic to a subgroup of $\Aut(F_d)\cong (\Z/d\Z)^2\rtimes \mathfrak{S}_3$, where $F_d$ is the Fermat curve defined by $x^d+y^d+z^d=0$. Since $d$ is odd, $H_2$ is of order at most $2$. Then $m(H)\le m(H_2)\le 2$.

(b-ii)~ The group $H$ is isomorphic to a subgroup of $\Aut(C_d)$, where $C_d$ is the Klein curve defined by $xy^{d-1}+yz^{d-1}+zx^{d-1}=0$. Since $\Aut(C_d)$ is of odd order $3(d^2-3d+3)$ (see \cite[Proposition 3.5]{Ha19}), the group $H$ is of odd order and $m(H)=1$.

(c)~ The group $H$ is isomorphic to one of the following groups: $\mathfrak{A}_5$, $\PSL(2,\F_7)$, $\mathfrak{A}_6$, $H_{216}$, $(\Z/3\Z)^2\rtimes \Z/4\Z$, $(\Z/3\Z)^2\rtimes Q_8$.  Here $H_{216}$ is the Hessian group of order $216$, and the last two groups are subgroups of $H_{216}$.

By Lemmas \ref{lem:ab} and \ref{lem:charsub}, we infer that $m((\Z/3\Z)^2\rtimes \Z/4\Z)=2$. For $H=\mathfrak{A}_5$, $\PSL(2,\F_7)$, $\mathfrak{A}_6, H_{216}$, $(\Z/3\Z)^2\rtimes Q_8$, the automorphism group $\Aut(H)$ is isomorphic to $\mathfrak{S}_5$, $\PSL(2,\F_7)\rtimes \Z/2\Z$, $\mathfrak{S}_6\rtimes \Z/2\Z$, $H_{216}\rtimes \Z/2\Z$, $H_{216}\rtimes \Z/2\Z$, respectively. Moreover, the number of conjugacy classes of elements of order at most $2$ in $\Aut(H)$ is equal to $3$, $3$, $4$, $3$, $3$, respectively. Then we conclude that $m(H)=2$ by a computation as the one in Example \ref{ex:Q8}. We give the details for the case $H=\mathfrak{A}_5$. Note that we have the isomorphism $\psi\colon \mathfrak{S}_5\rightarrow \Aut(\mathfrak{A}_5)$ given by $\psi(\alpha)(\beta)=\alpha \beta \alpha^{-1}$ for any $\alpha\in \mathfrak{S}_5$, $\beta\in
\mathfrak{A}_5$. Then $\Aut(\mathfrak{A}_5)$ has three conjugacy classes of elements of order at most $2$ which are represented by $\psi(\alpha_1)$, $\psi(\alpha_2)$, $\psi(\alpha_3)$, where $\alpha_1=(1)$, $\alpha_2=(12)(34)$, $\alpha_3=(12)$. These representatives give three homomorphisms $\phi_t\colon G\rightarrow \Aut(\mathfrak{A}_5)$ with $\phi_t(c)=\psi(\alpha_t)$ ($t=1,2,3$). Clearly, we have $\#{\rm H}^1(G, H_{\phi_1})=\#{\rm H}^1(G, H_{\triv})=2$. Since the automorphism $\psi(\alpha_2)$ is an inner automorphism given by the conjugation action of the element $\alpha_2$ in $H=\mathfrak{A}_5$, by \cite[Proposition I.35 bis]{Se02}, we have $\#{\rm H}^1(G, H_{\phi_2})=\#{\rm H}^1(G, H_{\triv})=2$. For $\#{\rm H}^1(G, H_{\phi_3})$, we observe that $H_2:=\{(1), (12)(34), (13)(24), (14)(23)\}$ is a Sylow $2$-subgroup of $H$ preserved by $G$, and a computation shows that $Z^1(G, H_2)=\{(1), (12)(34)\}$, which implies $\#{\rm H}^1(G, H_{\phi_3})\le \#{\rm H}^1(G, H_2)\le
\#Z^1(G, H_2)=2$. Therefore, we conclude that $m(\mathfrak{A}_5)=2$. The cases $H=\PSL(2,\F_7)$, $\mathfrak{A}_6, H_{216}$, $(\Z/3\Z)^2\rtimes Q_8$ can be treated in a similar~way.

\item\label{p4.8-2} Suppose $d=4k+2$ with $k\ge 1$. We apply \cite[Theorem 2.3]{Ha19} again.  As in~\eqref{p4.8-1}, for cases (a-i), (b-ii) and (c), we have $m(H)\le 2$.  Then we only need to consider cases (a-ii) and~(b-i).

(a-ii)~  We consider the subgroup of $\PGL(3,\C)$ consisting of elements represented by matrices of the form $\begin{psmallmatrix}A&0\\0&1\end{psmallmatrix}$ with $A\in\GL(2,\C)$.  Under identification of this group with $\GL(2,\C)$, the natural projection $\pi\colon \GL(2,\C)\rightarrow \PGL(2,\C)$ coincides with the map $\rho \colon \PBD(2,1)\rightarrow \PGL(2,\C)$ in \cite[Theorem 2.3]{Ha19}.  Then $H$ can be viewed as a subgroup of $\GL(2,\C)$, and it fits in the exact sequence
  $$
  1\longrightarrow N \longrightarrow H\xrightarrow{^{\pi|H}}  K\longrightarrow 1,
  $$
where $K\subset \PGL(2,\C)$ is isomorphic to one of the following groups: $\Z/n\Z$, $D_{2n}$, $\mathfrak{A}_4$, $\mathfrak{S}_4$, $\mathfrak{A}_5$. Since the order $|N|$ of $N$ divides $d$ (see \cite[Lemma 3.8]{Ha19}), we have $4\nmid |N|$. If $2\nmid |N|$, then $H_2\cong K_2$ and we have $m(H)\le m(H_2)=m(K_2)\le 4$.  Thus, we only need to consider the case where $2$ divides $|N|$. Then $H_2$ fits in the exact sequence
$$
1\longrightarrow \Z/2\Z \longrightarrow H_2\xrightarrow{^{\pi|H_2}}  K_2\longrightarrow 1. 
$$ Clearly, $K_2$ is either cyclic or a dihedral group. If $K_2$ is cyclic, then $H_2$ is abelian and $m(H_2)\le 4$. So we may assume $K_2\cong D_{2^s}$ for some $s\ge 2$. Then by Lemma \ref{lem:a-ii}, we have $4\ge m(H_2)\ge m(H)$.

(b-i)~ The group $H$ is isomorphic to a subgroup of $(\Z/d\Z)^2\rtimes \mathfrak{S}_3$.  Since a Sylow $2$-subgroup of $(\Z/d\Z)^2\rtimes \mathfrak{S}_3$ is isomorphic to $D_8$, the group $H_2$ is a subgroup of $D_8$, which implies that $m(H)\le m(H_2)\le 4$.

This completes the proof for $d \equiv 2 \; (\bmod \; 4)$.

\item\label{p4.8-3} Suppose $d=4k$ with $k\ge 1$. We claim that $m(M)\le 8$ for any finite subgroup $M$ in $\GL(2,\C)$. If $M$ is abelian, then we may assume that $M$ consists of diagonal matrices, which implies that $M$ is generated by two elements and $m(M)\le 4$ by Lemma \ref{lem:ab}. If $M$ is not abelian, then the center $Z(M)$ of $M$ is a cyclic group consisting of scalar matrices and the quotient group $M/Z(M)$ is isomorphic to a finite subgroup in $\PGL(2,\C)$ (\textit{i.e.} $M/Z(M)$ is isomorphic to one of the following groups: $\Z/n\Z$, $D_{2n}$, $\mathfrak{A}_4$, $\mathfrak{S}_4$, $\mathfrak{A}_5$). Thus, by Lemma \ref{lem:charsub}, we infer that $m(M)\le m(Z(M)) m(M/Z(M))\le 2\cdot 4=8$. This completes the proof of the claim.

Next we apply \cite[Theorem 2.3]{Ha19} as in \eqref{p4.8-1} and \eqref{p4.8-2}. As in~\eqref{p4.8-1}, for cases (a-i) and~(c), we have $m(H)\le 2$. Then we only need to consider cases~(a-ii), (b-i) and~(b-ii).

(a-ii)~ As in~\eqref{p4.8-2}, $H$ can be viewed as a subgroup of $\GL(2,\C)$. Then $m(H)\le 8$ by the previous claim.

(b-i)~ As in~\eqref{p4.8-1}, $H$ is isomorphic to a subgroup of $\Aut(F_d)\cong (\Z/d\Z)^2\rtimes \mathfrak{S}_3$. Consider the finite subgroup $M_1\subset \PGL(3, \C)$ generated by the following matrices: $\diag(\xi_{d},1,1)$, $\diag(1,\xi_d,1)$, $\begin{psmallmatrix}0& 1&0\\1&0&0\\0&0&1\end{psmallmatrix}$. Here $\xi_d$ is a primitive $\supth{d}$ root of unity. Clearly, $M_1 \cong (\Z/d\Z)^2\rtimes \Z/2\Z$ contains a Sylow $2$-subgroup of $\Aut(F_d)$, and $M_1$ can be viewed as a finite subgroup in $\GL(2,\C)$. Thus, a Sylow $2$-subgroup of $H$ is isomorphic to a subgroup in $\GL(2,\C)$. Then by Proposition \ref{pp:sylow2} and the previous claim, we have $m(H)\le 8$.

(b-ii)~ As in~\eqref{p4.8-1}, if $d\ge 8$, then the order $|H|$ is odd, which implies $m(H)=1$. If $d=4$, then $H$ is isomorphic to a subgroup of $\PSL(2, \F_7)$ (see \cite[Theorem 2.5]{Ha19}), which implies $m(H)\le 4$ since a Sylow $2$-subgroup of $\PSL(2, \F_7)$ is isomorphic to $D_8$.

  This completes the proof for $d \equiv 0 \; (\bmod \; 4)$.\qedhere
  \end{enumerate}
\end{proof}

\begin{remark}
Recently Sasaki \cite{Sa23} computed the number of real forms of all Fermat hypersurfaces of degree at least $3$. In particular, for Fermat curves $F_d$ he showed that $\#\; \RF(F_d)=2$ (resp.\ $3$) if $d$ is odd (resp.\ even).  Thus, our bound in Theorem \ref{thm:planecurve} is optimal for odd $d$.  For even $d\geq 6$, we can prove by a computation similar to the one in \cite{Sa23} that the curve $X_d$ defined by $x^d+y^d+z^d+y^2z^{d-2}=0$ has four real forms.  In fact, $\Aut(X_d/\C)$ is isomorphic to $\Z/d\Z \times \Z/2\Z$, and it is generated by $(x: y: z) \mapsto (\xi_d x: y: z)$ and $(x: y: z) \mapsto (x: -y: z)$ with $\xi_d$ a primitive $\supth{d}$ root of unity. This implies that our bound in Theorem~\ref{thm:planecurve} for $d\equiv 2 \, (\bmod\, 4)$ is optimal.  For $d\equiv 0 \, (\bmod\, 4)$, we speculate that the optimal bound is $6$ instead of $8$.
\end{remark}


\end{document}